\documentclass[12pt]{amsart}
\usepackage{mathrsfs}
\usepackage{txfonts}
\usepackage{amssymb}
\usepackage{color}
\makeatletter \@mparswitchfalse \makeatother
\newtheorem{question}{Question}[section]
\newtheorem{theorem}{Theorem}[section]
\newtheorem{lemma}{Lemma}[section]
\newtheorem{corollary}{Corollary}[section]
\newtheorem{proposition}{Proposition}[section]

\newtheorem{definition}{Definition}[section]

\def\eqref#1{(\ref{eq#1})}

\numberwithin{equation}{section}
\begin{document}
\title{Partial factorization  and reflexivity   of operator algebras}
 \author[F. Jia]{Fengyang Jia}\author[G. Ji]{Guoxing Ji$^*$}\thanks{$^*$Corresponding author}
  \address{School  of Mathematics and Statistics,
  Shaanxi Normal University,
  Xian , 710119, People's  Republic of  China}
 \email{jfy123@snnu.edu.cn} \email{gxji@snnu.edu.cn}

     \thanks{This research was
supported by the National Natural
   Science Foundation of China(No. 12271323).
}
    \subjclass{Primary    47L35, 47L30; Secondary 46K50}
\keywords{operator algebra, transitive algebra, partial factorization, nest, Reflexivity} \maketitle
\begin{abstract} 
 Let $\mathcal{H}$ be a separable infinite dimensional Hilbert space and $\mathcal{B}(\mathcal{H})$ the algebra of all bounded linear operators on $\mathcal{H}$. A subalgebra $\mathfrak{A}$ in $\mathcal{B}(\mathcal{H})$ has the left (resp.\ right) partial factorization property if for any invertible operator $S\in\mathcal{B}(\mathcal{H})$, there exists an isometry (resp.\ a co-isometry) $U\in\mathcal{B}(\mathcal{H})$ such that $U^*S, S^{-1}U\in\mathfrak{A}$. We show that if $\mathfrak{A}$ is weak operator topology closed with the left (resp.\ right) partial factorization property, then $\mathfrak{A}$ is the nest algebra associated with its invariant subspace lattice. In particular, if $\mathfrak{A}$ is transitive, then $\mathfrak{A}=\mathcal{B}(\mathcal{H})$. This gives a positive answer to Question 6.3 raised by B.V.R. Bhat and M. Kumar in \emph{Publ. Res. Inst. Math. Sci.} \textbf{60}(2024), 507--537.
\end{abstract}

\baselineskip18pt

\section{Introduction}
  Let \( \mathcal{H} \) be a separable infinite-dimensional Hilbert space, and let \( \mathcal{B}(\mathcal{H}) \) denote the algebra of all bounded linear operators on \( \mathcal{H} \). One of the most fundamental open problems in operator theory is the invariant subspace problem: does every operator in \( \mathcal{B}(\mathcal{H}) \) admit a nontrivial invariant subspace?
A closely related famous open question is the transitive algebra problem: if \( \mathfrak{A} \) is a closed  unital  subalgebra of \( \mathcal{B}(\mathcal{H}) \)  in  the weak operator topology with only trivial invariant subspaces, does it necessarily follow that \( \mathfrak{A} = \mathcal{B}(\mathcal{H}) \)? These classical problems have been extensively studied over decades and remain unsolved to this day (cf. \cite{chengg,hou1,hou2,rad}). Very recently, new progress has been made on Halmos' third problem; see \cite{chengl,jiang}.
On the other hand, it is of great interest to determine the invariant subspace lattice of a unital subalgebra \( \mathfrak{A} \) of \( \mathcal{B}(\mathcal{H}) \). Suppose that \( \mathfrak{A} \subseteq \mathcal{M} \) for some von Neumann algebra \( \mathcal{M} \). A particularly interesting topic is to investigate the relative invariant subspace lattice of \( \mathfrak{A} \), which consists of all projections in \( \mathcal{M} \) whose ranges are invariant under \( \mathfrak{A} \).
For instance, Gilfeather and Larson \cite{gil} characterized the relative invariant subspace lattice of the nest subalgebra \( \operatorname{Alg}\mathcal{N}\cap \mathcal{M} \) associated with a nest \( \mathcal{N} \) in \( \mathcal{M} \). Bhat and Kumar \cite{bha} proved that if \( \mathfrak{A}\subseteq \mathcal{M} \) is logmodular (respectively, has the factorization property) in the sense that the set \( \{A^*A: A, A^{-1} \in\mathfrak{A}\} \) is dense in (respectively, coincides with) the set of all positive invertible elements of \( \mathcal{M} \), then the relative invariant subspace lattice of \( \mathfrak{A} \) is commutative. Furthermore, if \( \mathcal{M} \) is a factor, then this lattice is a nest. In particular, they proposed the following question.
\begin{question}(\cite[Question 6.3]{bha}) Is every weakly closed algebra with the factorization property in \( \mathcal{B}(\mathcal{H}) \) automatically reflexive? In particular, is every weakly closed transitive algebra with the factorization property equal to \( \mathcal{B}(\mathcal{H}) \)?
\end{question}
 We completely answer this open question in the present paper. Specifically, we prove that if \( \mathfrak{A}\subseteq\mathcal B(\mathcal H) \) is closed in the weak operator topology and has  the left (resp.\ right) partial factorization property, then \( \mathfrak{A} \) coincides with the nest algebra associated with its invariant subspace lattice. In particular, if \( \mathfrak{A} \) is transitive, then \( \mathfrak{A} = \mathcal{B}(\mathcal{H}) \). To proceed, we first recall some necessary definitions and preliminary notions.

  For any \( x,y\in \mathcal{H} \), let \( x\otimes y \) denote the rank-one operator on \( \mathcal{H} \) defined by \( x\otimes y(z)=\langle z,y\rangle x \) for all \( z\in\mathcal{H} \). We write \( \mathcal{F}(\mathcal{H}) \) and \( \mathcal{K}(\mathcal{H}) \) for the sets of all finite-rank operators and compact operators on \( \mathcal{H} \), respectively. For any \( A\in\mathcal{B}(\mathcal{H}) \), let \( [A] \) denote the image of \( A \) in the Calkin algebra \( \mathcal{B}(\mathcal{H})/\mathcal{K}(\mathcal{H}) \), and let \( \sigma(A) \) and \( \sigma_e(A) \) stand for the spectrum and essential spectrum of \( A \), respectively. Throughout this paper, \( I \) denotes the identity operator on any Hilbert space without ambiguity. We identify each closed subspace \( M \subseteq \mathcal{H} \) with the corresponding orthogonal projection \( P_M \in \mathcal{B}(\mathcal{H}) \) from \( \mathcal{H} \) onto \( M \).
Given a subset \( \mathcal{S}\subseteq\mathcal{B}(\mathcal{H}) \), the commutant of \( \mathcal{S} \) is defined as
\[
\mathcal{S}^{\prime}=\{B\in\mathcal{B}(\mathcal{H}): AB=BA \text{ for all } A\in\mathcal{S}\}.
\]
For a von Neumann algebra \( \mathcal{M} \), its center is given by \( \mathcal{Z}(\mathcal{M})=\mathcal{M}\cap\mathcal{M}^{\prime} \). We denote by \( \mathcal{M}_p \) and \( \mathcal{M}^{-1} \) the set of all projections and the set of all invertible elements in \( \mathcal{M} \), respectively.
Let \( \mathfrak{A} \) be a unital subalgebra of \( \mathcal{M} \). The relative invariant subspace lattice of \( \mathfrak{A} \) in \( \mathcal{M} \) is defined as
\[
\operatorname{Lat}_{\mathcal{M}}\mathfrak{A}=\{E\in\mathcal{M}_p: E^{\perp}AE=0 \text{ for all } A\in\mathfrak{A}\}.
\]
 When $\mathcal M=\mathcal B(\mathcal H)$, we  write $\operatorname{Lat}\mathfrak A$  for  the invariant subspace lattice of $\mathfrak A$. The following notion was introduced by Pitts in \cite{pit}.
\begin{definition}(\cite{pit})
Let \( \mathfrak{A} \) be a unital subalgebra of \( \mathcal{M} \), and let \( S\in\mathcal{M}^{-1} \). If there exists an isometry (resp.\ a co-isometry) \( U\in\mathcal{M} \) such that \( U^*S, S^{-1}U\in\mathfrak{A} \), then \( S \) is said to admit a left (resp.\ right) partial factorization in \( \mathfrak{A} \). If every invertible element in \( \mathcal{M}^{-1} \) admits a left (resp.\ right) partial factorization in \( \mathfrak{A} \), then \( \mathfrak{A} \) is said to have the left (resp.\ right) partial factorization property.
\end{definition}
Furthermore, if the above isometry/co-isometry \( U \) is unitary, then \( S \) is said to admit a factorization in \( \mathfrak{A} \). Accordingly, \( \mathfrak{A} \) is said to have the factorization property if every invertible element in \( \mathcal{M}^{-1} \) admits a factorization in \( \mathfrak{A} \). For a \( \sigma \)-weakly closed subalgebra \( \mathfrak{A} \), it is known from \cite[Proposition 4.4]{pit} that an invertible  operator \( S \) admits a factorization in \( \mathfrak{A} \) if and only if \( S \) admits both left and right partial factorizations in \( \mathfrak{A} \). We show that if $\mathfrak A$ has the left(resp.\ right) partial factorization property, then $\mathfrak A$ is the nest algebra associated  to its invariant subspace lattice.
 \section{Reflexivity of $\mathfrak A$ with   partial factorization}
  We recall that a subalgebra $\mathcal{A}$ of $\mathcal{B}(\mathcal{H})$ is reflexive if
\[
\mathcal{A} = \operatorname{Alg}\operatorname{Lat}\mathcal{A}
= \bigl\{A\in\mathcal{B}(\mathcal{H}) : E^{\perp}AE = 0,\ \forall E\in \operatorname{Lat}\mathcal{A}\bigr\},
\]
and is transitive if $\operatorname{Lat}\mathcal{A} = \{0, I\}$. A lattice $\mathcal N$ in $\mathcal B(\mathcal H)_p$ is a nest if it is completely ordered. For any nonzero $N\in\mathcal{N}$, we define
\[
N_-=\bigvee\bigl\{P\in\mathcal{N}: P<N\bigr\}.
\]
The projections $N\ominus N_-$ are called the atoms of $\mathcal{N}$. If
\[
\bigoplus_{N\in\mathcal{N}} N\ominus N_-=I,
\]
then $\mathcal{N}$ is called atomic.
\begin{theorem}\label{t1}
Let $\mathfrak{A}\subseteq\mathcal{B}(\mathcal{H})$ be a   unital closed subalgebra in the weak operator topology. If $\mathfrak{A}$ has the left (resp.\ right) partial factorization property, then $\mathfrak{A}\cap\mathcal{K}(\mathcal{H})\neq\{0\}$.
\end{theorem}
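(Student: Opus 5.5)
The plan is to extract from a single partial factorization an explicit idempotent or an explicit compact operator lying in $\mathfrak A$. I treat the left case; the right case is the same argument with the roles of $U$ and $U^*$ exchanged, or follows by passing to adjoints.

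\emph{Step 1: the basic algebraic identity.} Fix $S\in\mathcal B(\mathcal H)^{-1}$, an isometry $U$, and put $B=U^*S\in\mathfrak A$ and $C=S^{-1}U\in\mathfrak A$. Then
\[
BC=U^*U=I,\qquad CB=S^{-1}UU^*S=I-S^{-1}(I-UU^*)S .
\]
Since $\mathfrak A$ is a unital algebra, the idempotent $E_S:=S^{-1}(I-UU^*)S$ lies in $\mathfrak A$. If $U$ is a non-unitary isometry with $\operatorname{rank}(I-UU^*)<\infty$, then $E_S$ is a nonzero finite-rank element of $\mathfrak A$, and we are done. This leaves two cases to rule out or exploit: $U$ unitary, and $I-UU^*$ of infinite rank.

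\emph{Step 2: force a compact difference.} Choose $S=I+F$ with $F$ finite rank and $S$ invertible. Then $S^{-1}=I+F'$ with $F'$ finite rank, and
\[
B=U^*+U^*F,\qquad C=U+F'U .
\]
Apply the same construction to $S=I$ with a possibly different isometry $U_0$. This gives $U_0,U_0^*\in\mathfrak A$, and hence also the projection $U_0U_0^*\in\mathfrak A$.

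Now argue by contradiction. Suppose $\mathfrak A\cap\mathcal K(\mathcal H)=\{0\}$. Then the quotient map $\pi:\mathfrak A\to\mathcal B(\mathcal H)/\mathcal K(\mathcal H)$ is injective. Combined with Step 1, this means no $E_S$ can be a nonzero finite-rank operator. I would then use products such as $U_0^*C$ and $BU_0$, which lie in $\mathfrak A$ and agree modulo $\mathcal K(\mathcal H)$ with $U_0^*U$ and $U^*U_0$. The aim is to produce two elements of $\mathfrak A$ with the same image in the Calkin algebra. Their difference is then a compact element of $\mathfrak A$, so it must be $0$. This yields rigid identities such as $U_0^*U^{\ }F'$-terms vanishing.

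The key choice is to vary $F$ over rank-one operators $x\otimes y$. Ranging over these, I would try to deduce that $\mathfrak A$ contains $U_0^*(x\otimes y)U_0$-type operators. Any such nonzero operator already contradicts $\mathfrak A\cap\mathcal K(\mathcal H)=\{0\}$.

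\emph{Step 3: the unitary and infinite-corank cases.} If every factorizing $U$ is unitary, then $S=UB$ with $B,B^{-1}\in\mathfrak A$. Hence $S^*S=B^*B$ for every invertible $S$, so $\mathfrak A$ has the factorization property. In that case I would use Bhat--Kumar type arguments: the lattice $\operatorname{Lat}\mathfrak A$ is a nest, and testing against an $S$ that is a compact perturbation of the identity should produce a finite-rank element.

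If instead $I-UU^*$ has infinite rank, I would choose $S$ adapted to $U$. Specifically, take $S$ equal to the identity on $U\mathcal H$ plus a finite-rank twist that moves a vector of $\ker U^*$. Refactorizing this $S$, the new idempotent $E_S$ differs from the old one by a finite-rank operator, and this difference lies in $\mathfrak A$.

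\emph{Expected obstacle.} The main difficulty is that the isometry $U$ is not canonical: it depends on $S$, so the factorizations of $I$ and of $I+F$ need not be related. The step I expect to be hardest is Step 2, namely manufacturing two elements of $\mathfrak A$ that provably agree modulo compacts despite the freedom in choosing $U$. My first attempt would be to exploit $BC=I$ together with the fact that $B^*-C=(F^*-F')U$ is finite rank. Multiplying on the appropriate side by $U_0$ and $U_0^*$, both of which lie in $\mathfrak A$, should convert this finite-rank defect into an element of $\mathfrak A$.
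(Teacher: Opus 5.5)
Your Step 1 is correct and useful. Since $BC=U^*U=I$, the operator $I-CB=S^{-1}(I-UU^*)S$ is an idempotent in $\mathfrak A$. So any factorization whose isometry has finite nonzero defect immediately gives a nonzero finite-rank element of $\mathfrak A$. The paper's spectral argument does not in fact cover this case. Take $U$ the unilateral shift and $K$ diagonal with positive summable entries: both $U^*(I+K)$ and $(I+K)^{-1}U$ then have the closed unit disc as spectrum, with no isolated points, whereas your idempotent has rank one.

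Beyond Step 1, however, the proposal is a programme rather than a proof. The two cases you leave open, $U$ unitary and $\operatorname{rank}(I-UU^*)=\infty$, are exactly where the difficulty lies.
\begin{itemize}
\item In Step 2 you never produce two elements of $\mathfrak A$ with the same Calkin image. $U_0^*C$ and $BU_0$ agree modulo $\mathcal K(\mathcal H)$ with $U_0^*U$ and $U^*U_0$, but neither of those is known to lie in $\mathfrak A$. Likewise $B^*-C=(F^*-F')U$ is finite rank, but $B^*$ need not lie in the non-self-adjoint algebra $\mathfrak A$.
\item In Step 3, the isometry supplied for a new invertible bears no relation to the old one. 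So there is no reason the new $E_S$ should differ from the old one by a finite-rank operator.
\item The Bhat--Kumar result only says that $\operatorname{Lat}\mathfrak A$ is a nest, which gives no compact elements.
\end{itemize}

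The paper's route is a comparison of norms with essential norms. Take $S=I+K$ with $K\geq 0$ compact, injective and $\|K\|<1$. Your $B=U^*(I+K)$ and $C=(I+K)^{-1}U$ are compact perturbations of $U^*$ and $U$, so $\|B\|_e=\|C\|_e=1$. On the other hand, $\|B\|\le 1$ and $\|C\|\le 1$ cannot both hold: from $BC=I$ (resp.\ $CB=I$), $C$ (resp.\ $B$) would be an isometry, contradicting the injectivity of $K$ (resp.\ $\|I+K\|>1$). The paper then takes a finite-rank Riesz idempotent at a point of $\sigma(B)\setminus\sigma_e(B)$ or $\sigma(C)\setminus\sigma_e(C)$.

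Be aware that this last step needs the spectral radius, not merely the norm, to exceed $1$. Moreover, in your two open cases the factorization of $I+K$ alone can carry no information about compacts. Take $K$ positive, injective and trace class with eigenbasis $\{e_j\}$, and let $U$ be the bilateral shift, or a unilateral shift of infinite multiplicity, along a suitable enumeration of $\{e_j\}$. A bounded invertible diagonal operator $D$ then gives simultaneously $B=DU^*D^{-1}$ and $C=DUD^{-1}$. Hence $\sigma(B)=\sigma_e(B)$ even though $\|B\|>1$. The weakly closed algebra generated by $B$ and $C$ is similar to the von Neumann algebra generated by $U$, which is a copy of $L^\infty(\mathbb T)$, resp.\ $\mathcal B(\ell^2)\otimes I$, and contains no nonzero compact operator.

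So closing the gap requires genuinely combining the factorizations of many different invertibles. Neither your Steps 2--3 nor the paper's single-$S$ argument does this.
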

\begin{proof}
We assume that $\mathfrak{A}$ has the left (resp.\ right) partial factorization property. Take any positive injective compact operator $K$ with $\|K\|<1$. Then $I+K$ is invertible. There exists an isometry (resp.\  a co-isometry) $U\in\mathcal{B}(\mathcal{H})$ such that
\[
A=U^*(I+K) \quad \text{and} \quad B=(I+K)^{-1}U
\]
are elements of $\mathfrak{A}$.

Thus, $A-U^*=U^*K$ and $B-U=-K(I+K)^{-1}U$. It follows that $[A]=[U^*]$ and $[B]=[U]$ in the Calkin algebra $\mathcal{B}(\mathcal{H})/\mathcal{K}(\mathcal{H})$. Consequently,
\[
\|A\|_e=\|U^*\|_e=\|U\|_e=\|B\|_e=1.
\]
We claim that $\|A\|>1$ or $\|B\|>1$. Suppose, for contradiction, that $\|A\|\leq 1$ and $\|B\|\leq 1$. Note that $AB=I$ (resp.\ $BA=I$), so
\[
\|x\|=\|ABx\|\leq\|A\|\|Bx\|\leq\|Bx\|\leq\|B\|\|x\|\leq\|x\|
\]
(resp.
\[
\|x\|=\|BAx\|\leq\|B\|\|Ax\|\leq\|Ax\|\leq\|A\|\|x\|\leq\|x\|)
\]
for all $x\in\mathcal{H}$. This implies that $B$ (resp.\ $A$) is an isometry.

If $\mathfrak{A}$ has the left partial factorization property, then $U=(I+K)B$ and
\[
B^*(I+K)^2B=I.
\]
Hence
\[
BB^*=BB^*(I+K)^2BB^*=BB^*\big(I+2K+K^2\big)BB^*.
\]
Since $BB^*$ is a nonzero projection, we obtain $BB^*(2K+K^2)BB^*=0$, which yields $(2K+K^2)BB^*=0$. This contradicts the injectivity of $2K+K^2$.

If $\mathfrak{A}$ has the right partial factorization property, then $UA=UU^*(I+K)=I+K$ is a contraction, which is also a contradiction.

Therefore, $\|A\|>1$ or $\|B\|>1$. If $\|A\|>1=\|A\|_e$, then $\sigma(A)\setminus\sigma_e(A)\neq\emptyset$. Hence there exists some $ a\in\sigma(A)\setminus\sigma_e(A)$ such that the Riesz idempotent $P_a$ corresponding to the spectral point $a$ of $A$ is of finite rank. Since $a$ is an isolated point in $\sigma(A)$, we conclude that $P_a\in\mathfrak{A}$. We obtain an analogous conclusion if $\|B\|>1=\|B\|_e$.
\end{proof}
Combining Theorem \ref{t1} and \cite[Theorem 8.3]{rad}, we obtain the following corollaries.
 \begin{corollary}\label{c1}
Let $\mathfrak{A}\subset\mathcal{B}(\mathcal{H})$ be a unital closed transitive algebra in the weak operator topology. If $\mathfrak{A}$ has the left (resp.\ right) partial factorization property, then $\mathfrak{A}=\mathcal B(\mathcal H)$ .
\end{corollary}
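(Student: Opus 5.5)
The plan is to combine Theorem \ref{t1} with the classical result of Radjavi and Rosenthal, \cite[Theorem 8.3]{rad}. That result says a weakly closed transitive algebra containing a nonzero compact operator must be all of $\mathcal B(\mathcal H)$. It is the corollary's intended route, as the sentence just before it indicates.

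First, I check that Theorem \ref{t1} applies. $\mathfrak A$ is unital, closed in the weak operator topology, and has the left (resp.\ right) partial factorization property. Theorem \ref{t1} then gives $\mathfrak A\cap\mathcal K(\mathcal H)\neq\{0\}$.

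In fact its proof gives more. For some $a\in\sigma(A)\setminus\sigma_e(A)$, the finite-rank Riesz idempotent $P_a$ is a nonzero finite-rank operator in $\mathfrak A$. Its membership holds because $P_a$ is a norm limit of polynomials in $A$, by Runge's theorem applied on a neighborhood of the spectrum separating $a$ from the rest of $\sigma(A)$, and $\mathfrak A$ is norm closed.

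Second, I use transitivity. By hypothesis $\operatorname{Lat}\mathfrak A=\{0,I\}$, so $\mathfrak A$ is a weakly closed transitive algebra containing a nonzero compact operator. Lomonosov's lemma, in the form recorded in \cite[Theorem 8.3]{rad}, then yields $\mathfrak A=\mathcal B(\mathcal H)$.

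If one prefers to avoid Lomonosov and use only the finite-rank operator $P_a$, there is an elementary argument.
\begin{itemize}
\item Since $\mathfrak A$ is transitive and contains a nonzero finite-rank operator, it contains a rank-one operator. One takes $F\in\mathfrak A$ of minimal positive rank and shows that $FTF$ restricted to the range of $F$ is scalar for all $T\in\mathfrak A$; transitivity forces rank one.
\item Write this operator as $x\otimes y$. Then $\mathfrak A(x\otimes y)\mathfrak A$ contains $Tx\otimes S^*y$ for all $T,S\in\mathfrak A$.
\item Transitivity makes $\{Tx\}$ and $\{S^*y\}$ dense, since $\mathfrak A^*$ is also transitive. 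Hence $\mathfrak A$ weakly contains all rank-one operators, and so $\mathfrak A=\mathcal B(\mathcal H)$.
\end{itemize}

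The only real content lies in Theorem \ref{t1}, which is already established. The main point to double-check is the membership $P_a\in\mathfrak A$, which relies on norm closedness via the holomorphic functional calculus. Beyond that, the corollary is a direct citation.
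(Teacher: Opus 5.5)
Your main argument is exactly the paper's: Theorem \ref{t1} puts a nonzero compact operator in $\mathfrak A$, and \cite[Theorem 8.3]{rad} then forces the weakly closed transitive algebra $\mathfrak A$ to be $\mathcal B(\mathcal H)$, so the proposal is correct. One caution concerns your side remark on $P_a\in\mathfrak A$: Runge's theorem gives $P_a$ as a norm limit of polynomials in $A$ only when $a$ lies in the unbounded component of $\mathbb C\setminus(\sigma(A)\setminus\{a\})$ (for example when $|a|$ exceeds the essential spectral radius of $A$), not merely because $a$ is isolated --- for $A=W\oplus 0$ on $L^2(\mathbb T)\oplus\mathbb C$ with $W$ the bilateral shift, one has $0\in\sigma(A)\setminus\sigma_e(A)$, yet with $\xi=1\oplus 0$ and $\eta=0\oplus 1$ the weakly continuous functional $T\mapsto\langle T\xi,\xi\rangle-\langle T\eta,\eta\rangle$ vanishes on every $p(A)$ by the mean value property but equals $-1$ at the Riesz idempotent $0\oplus I$, so that idempotent is not in the weakly closed algebra generated by $A$ and $I$ --- hence your Lomonosov-free finite-rank variant needs this extra justification, whereas the main route does not.
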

\begin{corollary}\label{c2}
Let $\mathfrak{A}\subsetneqq\mathcal{B}(\mathcal{H})$ be a   unital closed subalgebra in the weak operator topology. If $\mathfrak{A}$ has  the left (resp.\ right) partial factorization property, then $\mathfrak{A}$ admits a nontrivial invariant subspace.
\end{corollary}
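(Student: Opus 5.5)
Corollary \ref{c2} should follow from Theorem \ref{t1} combined with Lomonosov-type results in \cite{rad}. The plan is to argue by contradiction: suppose $\mathfrak{A}\subsetneqq\mathcal{B}(\mathcal{H})$ is a unital WOT-closed algebra with the left (resp.\ right) partial factorization property, and suppose $\mathfrak{A}$ has no nontrivial invariant subspace. Then $\mathfrak{A}$ is transitive. Corollary \ref{c1} then gives $\mathfrak{A}=\mathcal{B}(\mathcal{H})$, contradicting the strict inclusion. So the whole argument reduces to Corollary \ref{c1}, and I would make sure that corollary is solidly established.

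For Corollary \ref{c1}, I would proceed in two steps.

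\textbf{Step 1: a nonzero compact operator.} By Theorem \ref{t1}, $\mathfrak{A}\cap\mathcal{K}(\mathcal{H})\neq\{0\}$. More precisely, the proof of Theorem \ref{t1} produces a nonzero finite-rank Riesz idempotent $P_a\in\mathfrak{A}$.

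\textbf{Step 2: transitivity plus a compact forces everything.} I would invoke \cite[Theorem 8.3]{rad}: a transitive algebra containing a nonzero compact operator is WOT-dense in $\mathcal{B}(\mathcal{H})$. This is the Lomonosov-type step. If a self-contained argument is wanted in the finite-rank case, one can argue directly. The set $P_a\mathfrak{A}P_a$ is an algebra acting on the finite-dimensional space $P_a\mathcal{H}$. Transitivity of $\mathfrak{A}$ makes this compression transitive, so by Burnside's theorem it is the full matrix algebra. Choosing a minimal element then gives a rank-one operator in $\mathfrak{A}$. A transitive algebra containing a rank-one operator $x\otimes y$ contains $Ax\otimes A'^*y$ for all $A,A'\in\mathfrak{A}$. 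Since the orbits $\mathfrak{A}x$ and $\mathfrak{A}^*y$ are dense, taking WOT limits gives all rank-one operators, and hence $\mathfrak{A}=\mathcal{B}(\mathcal{H})$ because $\mathfrak{A}$ is WOT-closed.

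The main obstacle is checking that the cited theorem applies in our setting: we need WOT-closedness (assumed) and a genuinely nonzero compact element (supplied by Theorem \ref{t1}). Everything else is a direct logical consequence.
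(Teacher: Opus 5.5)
Your argument is correct and follows the paper's route exactly. Corollary \ref{c2} is the contrapositive of Corollary \ref{c1}, which the paper obtains by combining Theorem \ref{t1} with \cite[Theorem 8.3]{rad}, and your Burnside/rank-one argument is only an optional elementary substitute for the Lomonosov step when the compact element is a finite-rank idempotent.

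Be aware, though, that everything rests on Theorem \ref{t1}, and its proof's step from $\|A\|>1=\|A\|_e$ to an isolated point of $\sigma(A)\setminus\sigma_e(A)$ with a finite-rank Riesz idempotent is not valid in general. For instance, take $U^*=W$, the bilateral shift on $\ell^2(\mathbb{Z})$, and let $K$ be a positive injective trace-class operator, diagonal in the standard basis, with $\|K\|<1$. Then $A=W(I+K)$ has $\|A\|=1+\|K\|>1$ and $\|B\|=1$. Yet $\sigma(A)=\sigma_e(A)=\mathbb{T}$, because $A$ is an invertible weighted shift with $\sup_m\|A^m\|<\infty$ and $\|A^{-1}\|\le 1$. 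So your claim that the proof of Theorem \ref{t1} ``produces a nonzero finite-rank Riesz idempotent'' should not be taken on faith.
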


   We next  assume that $\mathfrak A$ is a weak operator topology closed subalgebra  with the left(resp.\ right) partial factorization property. Then $\operatorname{Lat}\mathfrak A$ is a nest by \cite[Corollary 2.9]{ji} and
  $\operatorname{Alg}\operatorname{Lat}\mathfrak A  $ is  the nest algebra associated with $\operatorname{Lat}\mathfrak A$. Define the diagonal algebra by
\[
\mathfrak{D} = \operatorname{Alg}\operatorname{Lat}\mathfrak A \cap (\operatorname{Alg}\operatorname{Lat}\mathfrak A)^*.
\]
Then $\mathfrak{A}\subseteq\operatorname{Alg}\operatorname{Lat}\mathfrak A$.

\begin{lemma}\label{l1}
Let $T\in\operatorname{Alg}\operatorname{Lat}\mathfrak A$ be invertible with $T^{-1}\in\operatorname{Alg}\operatorname{Lat}\mathfrak A$. Then there exists an isometry (resp.\ co-isometry) $U\in\mathfrak{D}$ such that $U^*T\in\mathfrak{A}$ and $T^{-1}U\in\mathfrak{A}$. In particular, $\mathfrak{A}\cap\mathfrak{D}$ has the left (resp.\ right) partial factorization property within $\mathfrak{D}$.
\end{lemma}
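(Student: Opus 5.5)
Apply the partial factorization property of $\mathfrak A$ directly to $T$, and then show that the resulting isometry (resp.\ co-isometry) $U$ lies in $\mathfrak D$. Write $\mathcal N=\operatorname{Lat}\mathfrak A$, a nest by \cite[Corollary 2.9]{ji}, so $\operatorname{Alg}\mathcal N$ is the nest algebra of $\mathcal N$ and $\mathfrak D=\mathcal N'$. Since $T$ is invertible in $\mathcal B(\mathcal H)$, there is an isometry (resp.\ co-isometry) $U$ with $A:=U^*T\in\mathfrak A$ and $B:=T^{-1}U\in\mathfrak A\subseteq\operatorname{Alg}\mathcal N$. It then suffices to prove $UN=NU$ for all $N\in\mathcal N$, since that puts $U$ in $\mathfrak D$. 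The final ``in particular'' follows by taking $T\in\mathfrak D^{-1}$. Then $T,T^{-1}\in\operatorname{Alg}\mathcal N$, so $U\in\mathfrak D$, and $U^*T$, $T^{-1}U$ lie in $\mathfrak A\cap\mathfrak D$ because $\mathfrak D$ is a $*$-algebra.

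For the left case ($U^*U=I$), we have $U=TB$ with $T,B\in\operatorname{Alg}\mathcal N$, so $U\in\operatorname{Alg}\mathcal N$, i.e.\ $UN=NUN$ for every $N\in\mathcal N$. Also $U^*=AT^{-1}$ with $A,T^{-1}\in\operatorname{Alg}\mathcal N$, so $U^*\in\operatorname{Alg}\mathcal N$, i.e.\ $U^*N=NU^*N$. Taking adjoints of the second relation gives $NU=NUN$. Combining the two relations gives $UN=NUN=NU$, so $U\in\mathcal N'=\mathfrak D$. The right case ($UU^*=I$) is identical: $A=U^*T$ and $B=T^{-1}U$ give the same membership statements once we write $U^*=AT^{-1}$ and $U=TB$. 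These identities hold for any $U$ satisfying $UU^*=I$ or $U^*U=I$? We need to check this. From $A=U^*T$ we get $AT^{-1}=U^*$ unconditionally, and from $B=T^{-1}U$ we get $TB=U$ unconditionally, so no isometry or co-isometry hypothesis is used here.

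The step I expect to need the most care is identifying $\mathfrak D$ with the commutant $\mathcal N'$ and checking that $U,U^*\in\operatorname{Alg}\mathcal N$ forces $U\in\mathfrak D$. This is the standard fact that $\operatorname{Alg}\mathcal N\cap(\operatorname{Alg}\mathcal N)^*=\mathcal N'$, and it amounts to the computation above. Everything else is direct from the factorization property, so the argument is essentially algebraic. In particular, Theorem~\ref{t1} and the corollaries are not needed here.
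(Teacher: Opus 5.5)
Your proof is correct and follows the paper's argument. You apply the partial factorization property to $T$, write $U^*=(U^*T)T^{-1}$ and $U=T(T^{-1}U)$ to get $U,U^*\in\operatorname{Alg}\operatorname{Lat}\mathfrak A$, hence $U\in\mathfrak D$, and then specialize to $T\in\mathfrak D$ for the final claim. Your detour through $\mathfrak D=\mathcal N'$ is harmless but unnecessary: $U,U^*\in\operatorname{Alg}\operatorname{Lat}\mathfrak A$ already gives $U\in\mathfrak D$ by the definition $\mathfrak D=\operatorname{Alg}\operatorname{Lat}\mathfrak A\cap(\operatorname{Alg}\operatorname{Lat}\mathfrak A)^*$.
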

\begin{proof} Since $\mathfrak{A}$ has the left (resp.\ right) partial factorization property, there exists an isometry (resp.\ co-isometry) $U\in\mathfrak{D}$ satisfying $U^*T\in\mathfrak{A}$ and $T^{-1}U\in\mathfrak{A}$. It follows that
\[
U^* = U^*TT^{-1}\in\operatorname{Alg}\operatorname{Lat}\mathfrak A,\qquad U = TT^{-1}U\in\operatorname{Alg}\operatorname{Lat}\mathfrak A,
\]
which implies $U\in\mathfrak{D}$. In particular, if $T\in\mathfrak{D}$, then $U^*T\in\mathfrak{D}\cap\mathfrak{A}$ and $T^{-1}U\in\mathfrak{D}\cap\mathfrak{A}$.
\end{proof}

  It is elementary that $\operatorname{Alg}\operatorname{Lat}\mathfrak A$ has  the left (resp. right) partial factorization property, owing to the inclusion $\mathfrak A \subseteq \operatorname{Alg}\operatorname{Lat}\mathfrak A$. By \cite[Theorems 1 and 2]{jigs}, $\operatorname{Lat}\mathfrak A$ is an atomic(resp.\ countable) nest.  Let $\{E_{\lambda}:\lambda \in \Lambda\}$ denote the atoms of $\operatorname{Lat}\mathfrak A$, where $\Lambda$ is an at most countable set. It is well known that
\begin{equation}\label{f1}
\mathfrak D=\bigoplus_{\lambda\in\Lambda}E_{\lambda}\mathcal B(\mathcal H)E_{\lambda}=\bigoplus_{\lambda\in\Lambda}\mathcal B(E_{\lambda} \mathcal H).
\end{equation}
Note that the center $\mathcal Z(\mathfrak D)$ of $\mathfrak D$ is given by $\{E_{\lambda}:\lambda\in\Lambda\}''$. If we may define
 $$\Phi(A)=\sum\limits_{\lambda\in\Lambda}E_{\lambda}AE_{\lambda}$$
  for all $A\in\mathcal B(\mathcal H)$, then $\Phi$ is  a unique faithful normal conditional expectation   from $\mathcal B(\mathcal H)$ onto $\mathfrak D$ such that  $\tau\circ \Phi=\tau$  and $\operatorname{Alg}\operatorname{Lat}\mathfrak A$ is a maximal subdiagonal algebra of $\mathcal B(\mathcal H)$ with respect to $\Phi$, where $\tau $ is the unique faithful normal semi-finite trace on $\mathcal B(\mathcal H)$(cf.\cite[Corollary 3.1.2]{arv1} and \cite[Theorem 3.1]{jio}). Put $(\operatorname{Alg}\operatorname{Lat}\mathfrak A)_0=\{A\in \operatorname{Alg}\operatorname{Lat}\mathfrak A:\Phi(A)=0\}$. Then $ (\operatorname{Alg}\operatorname{Lat}\mathfrak A)_0$ is a two side ideal of $\operatorname{Alg}\operatorname{Lat}\mathfrak A$ such that $\operatorname{Alg}\operatorname{Lat}\mathfrak A=\mathfrak D+(\operatorname{Alg}\operatorname{Lat}\mathfrak A)_0$.  Moreover, for each $\alpha\in\Lambda$, $E_{\alpha}=P_{\alpha}\ominus (P_{\alpha})_-$ for a unique $P_{\alpha}\in\operatorname{Lat}\mathfrak{A}$. We may thus define an order on $\Lambda$ by setting $\alpha<\beta$ whenever $P_{\alpha}<P_{\beta}$. Then $E_{\alpha}(\operatorname{Alg}\operatorname{Lat}\mathfrak A)E_{\beta}=\{0\}$ for any $\alpha >\beta$ and $\operatorname{Alg}\operatorname{Lat}\mathfrak A=\bigvee_{\alpha\leq \beta}E_{\alpha}\mathcal B(\mathcal H)E_{\beta}$   in the ($\sigma$-)weak operator topology.
\begin{lemma}\label{l2}
 For any $\lambda\in\Lambda$, the algebra $E_{\lambda}\mathfrak{A}E_{\lambda}$ is dense in $\mathcal{B}(E_{\lambda}\mathcal{H})$ with respect to the weak operator topology.
\end{lemma}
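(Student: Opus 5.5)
The plan is to reduce the statement to Corollary \ref{c1} applied on the atom $E_\lambda\mathcal H$. For a fixed $\lambda\in\Lambda$, let $\mathfrak B_\lambda$ be the weak operator closure of $E_\lambda\mathfrak A E_\lambda$ in $\mathcal B(E_\lambda\mathcal H)$. I will show three things: $\mathfrak B_\lambda$ is a unital weakly closed algebra, it is transitive, and it has the left (resp.\ right) partial factorization property. Corollary \ref{c1} then gives $\mathfrak B_\lambda=\mathcal B(E_\lambda\mathcal H)$. If $E_\lambda\mathcal H$ is finite dimensional, I will use the finite-dimensional analogue, which is Burnside's theorem.

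\textbf{Algebra structure.} Write $E_\lambda=P_\lambda-(P_\lambda)_-$, where $P_\lambda,(P_\lambda)_-\in\operatorname{Lat}\mathfrak A$. For $A,B\in\mathfrak A$ we have $E_\lambda AB E_\lambda=E_\lambda A E_\lambda B E_\lambda$. To see this, note that $(P_\lambda)_-$ is invariant, so $E_\lambda A (P_\lambda)_-=0$. Also $P_\lambda$ is invariant, so $BE_\lambda=P_\lambda BE_\lambda$. Together these give $E_\lambda A P_\lambda B E_\lambda=E_\lambda A E_\lambda B E_\lambda$. Hence $A\mapsto E_\lambda AE_\lambda$ is a homomorphism on $\mathfrak A$, and $E_\lambda\mathfrak AE_\lambda$ is a unital algebra on $E_\lambda\mathcal H$. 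Its weak closure $\mathfrak B_\lambda$ is therefore a unital weakly closed algebra.

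\textbf{Transitivity.} Suppose $M\subseteq E_\lambda\mathcal H$ is a closed subspace invariant under $E_\lambda\mathfrak AE_\lambda$. Then $(P_\lambda)_-\oplus M$ is invariant under $\mathfrak A$. Indeed, for $x\in M$, the vector $Ax$ lies in $P_\lambda\mathcal H$ and decomposes as $(P_\lambda)_-Ax+E_\lambda Ax\in (P_\lambda)_-\mathcal H\oplus M$. So $(P_\lambda)_-\oplus M\in\operatorname{Lat}\mathfrak A$. Since $\operatorname{Lat}\mathfrak A$ is a nest and this projection lies between $(P_\lambda)_-$ and $P_\lambda$, the definition of $(P_\lambda)_-$ forces it to equal one of them. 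Thus $M=0$ or $M=E_\lambda\mathcal H$.

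\textbf{Partial factorization.} Take $S$ invertible in $\mathcal B(E_\lambda\mathcal H)$ and set $T=S\oplus (I-E_\lambda)\in\mathfrak D$. Lemma \ref{l1} gives an isometry (resp.\ co-isometry) $U\in\mathfrak D$ with $U^*T,\,T^{-1}U\in\mathfrak A$. By \eqref{f1}, $U$ commutes with $E_\lambda$, so $U_\lambda=E_\lambda UE_\lambda$ is an isometry (resp.\ co-isometry) on $E_\lambda\mathcal H$. Moreover $E_\lambda U^*TE_\lambda=U_\lambda^*S$ and $E_\lambda T^{-1}UE_\lambda=S^{-1}U_\lambda$, and both lie in $E_\lambda\mathfrak AE_\lambda\subseteq\mathfrak B_\lambda$.

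\textbf{Conclusion and main risk.} Corollary \ref{c1} now yields $\mathfrak B_\lambda=\mathcal B(E_\lambda\mathcal H)$, which is the claimed density. The main obstacle I expect is justifying the use of Corollary \ref{c1}, which is stated for a separable infinite-dimensional space, on $E_\lambda\mathcal H$. Separability is inherited from $\mathcal H$. In the infinite-dimensional case, I would double-check that the proof of Theorem \ref{t1} goes through verbatim for $\mathfrak B_\lambda$; it only uses weak closedness and the partial factorization property established above. In the finite-dimensional case, Burnside's theorem applies directly, since $E_\lambda\mathfrak AE_\lambda$ is then already closed.
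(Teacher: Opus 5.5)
Your proof is correct and follows essentially the paper's route. You compress the partial factorizations from Lemma \ref{l1} to the atom $E_\lambda\mathcal H$ and invoke Corollary \ref{c1}; the paper instead argues by contradiction through its contrapositive, Corollary \ref{c2}. In both versions an $E_\lambda\mathfrak A E_\lambda$-invariant subspace $M$ gives $(P_\lambda)_-\oplus M\in\operatorname{Lat}\mathfrak A$, which the atom property forces to be trivial.

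Your version is slightly more careful than the paper's in two respects. You explicitly pass to the weak closure of $E_\lambda\mathfrak AE_\lambda$, which the corollaries require and the compression need not have. You also treat finite-dimensional atoms separately via Burnside's theorem.
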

\begin{proof}
Note that \( E_{\lambda} = P_{\lambda} \ominus (P_{\lambda})_- \) for some \( P_{\lambda} \in \operatorname{Lat}\mathfrak A \), and \( E_{\lambda} \) is a semi-invariant subspace of \( \mathfrak{A} \). It is well known that \( E_{\lambda}\mathfrak{A}E_{\lambda} \) is an algebra. Suppose, for contradiction, that there exists some \( \lambda \) such that \( E_{\lambda}\mathfrak{A}E_{\lambda} \) is not dense in \( \mathcal{B}(E_{\lambda}\mathcal{H}) \) with respect to the weak operator topology. Then, for any invertible operator \( T \in \mathcal{B}(E_{\lambda}\mathcal{H}) \), the operator \( S = T \oplus I \) is invertible in \( \mathfrak{D} \). Consequently, there exists an isometry (resp. co-isometry) \( U = U_1 \oplus U_2 \in \mathfrak{D} \) such that \( U^*S \) and \( S^{-1}U \) both lie in \( \mathfrak{D} \cap \mathfrak{A} \) by Lemma \ref{l1}. This implies that \( U_1^*T \) and \( T^{-1}U_1 \) are in \( E_{\lambda}\mathfrak{A}E_{\lambda} \). It is trivial that \( U_1 \) is an isometry (resp. co-isometry), so \( E_{\lambda}\mathfrak{A}E_{\lambda} \) has the left (resp. right) partial factorization property.

Hence, by Corollary \ref{c2}, \( E_{\lambda}\mathfrak{A}E_{\lambda} \) admits a nontrivial invariant subspace \( Q \) in \( \mathcal{B}(E_{\lambda}\mathcal{H}) \). It follows that \( (P_{\lambda})_- \oplus Q \) is an invariant subspace of \( \mathfrak{A} \) satisfying \( (P_{\lambda})_- < (P_{\lambda})_- \oplus Q < P_{\lambda} \). In fact, \( \left((P_{\lambda})_-\oplus Q\right)^{\perp}=(E_{\lambda}\ominus Q)\oplus P_{\lambda}^{\perp}\). Thus
\begin{align*}
&\ \ \ \ \left((P_{\lambda})_-\oplus Q\right)^{\perp}A\left((P_{\lambda})_-\oplus Q\right)\\
&=\left( (E_{\lambda}\ominus Q)\oplus P_{\lambda}^{\perp}\right)A\left((P_{\lambda})_-+Q\right) \\
&=\left((E_{\lambda}\ominus Q)\oplus P_{\lambda}^{\perp}\right)(P_{\lambda})_-)^{\perp}A(P_{\lambda})_- +\left((E_{\lambda}\ominus Q)\oplus P_{\lambda}^{\perp}\right)AQP_{\lambda}\\
&=\left(E_{\lambda}\ominus Q\right)AQ=\left(E_{\lambda}\ominus Q\right)E_{\lambda}AE_{\lambda}Q=0
\end{align*} for all $A\in \operatorname{Alg}\operatorname{Lat}\mathfrak A$.
This contradicts the fact that \( E_{\lambda} \) is an atom of \( \operatorname{Lat}\mathfrak A \). Therefore, \( E_{\lambda}\mathfrak{A}E_{\lambda} \) must be dense in \( \mathcal{B}(E_{\lambda}\mathcal{H}) \) with respect to the weak operator topology.
\end{proof}
 \begin{lemma} \label{l3}
 The relative invariant subspace lattice $\operatorname{Lat}_{\mathfrak D}(\mathfrak D\cap \mathfrak A)$  of $\mathfrak D\cap \mathfrak A$  is the projection lattice  of $\mathcal Z(\mathcal D)$.
\end{lemma}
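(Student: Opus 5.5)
The plan is to prove two inclusions. Here $\mathcal Z(\mathcal D)$ means $\mathcal Z(\mathfrak D)=\{E_\lambda:\lambda\in\Lambda\}''$. Its projections are exactly the sums $\sum_{\lambda\in\Gamma}E_\lambda$ with $\Gamma\subseteq\Lambda$.

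\emph{Central projections are invariant.} Every element of $\mathfrak D\cap\mathfrak A\subseteq\mathfrak D$ commutes with each $E_\lambda$, by \eqref{f1}. So every central projection $Z$ of $\mathfrak D$ satisfies $Z^\perp AZ=Z^\perp ZA=0$ for all $A\in\mathfrak D\cap\mathfrak A$. Hence $\mathcal Z(\mathfrak D)_p\subseteq\operatorname{Lat}_{\mathfrak D}(\mathfrak D\cap\mathfrak A)$.

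\emph{Invariant projections are central.} Let $E\in\mathfrak D_p$ be invariant under $\mathfrak D\cap\mathfrak A$. Since $E\in\mathfrak D$, it decomposes as $E=\sum_\lambda Q_\lambda$, where $Q_\lambda=E_\lambda E E_\lambda$ is a projection in $\mathcal B(E_\lambda\mathcal H)$. For each $A\in\mathfrak D\cap\mathfrak A$ the invariance condition splits into $(E_\lambda-Q_\lambda)A Q_\lambda=0$ for every $\lambda$. It therefore suffices to show that each $Q_\lambda$ is invariant under all of $\mathcal B(E_\lambda\mathcal H)$. Then $Q_\lambda\in\{0,E_\lambda\}$, and $E$ is central.

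\emph{Key step.} I must show that the compression $E_\lambda(\mathfrak D\cap\mathfrak A)E_\lambda$ is transitive on $E_\lambda\mathcal H$, or at least that every $Q_\lambda$ invariant under it is trivial. Lemma \ref{l2} gives density of $E_\lambda\mathfrak A E_\lambda$, but that set may be larger than $E_\lambda(\mathfrak D\cap\mathfrak A)E_\lambda$. So instead I repeat the argument of Lemma \ref{l2} with $\mathfrak D\cap\mathfrak A$ in place of $\mathfrak A$. For any invertible $T\in\mathcal B(E_\lambda\mathcal H)$, apply Lemma \ref{l1} to $S=T\oplus(I-E_\lambda)\in\mathfrak D$. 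This gives $U=U_1\oplus U_2\in\mathfrak D$ with $U^*S$ and $S^{-1}U$ in $\mathfrak D\cap\mathfrak A$. Consequently $U_1^*T$ and $T^{-1}U_1$ lie in $\mathfrak B_\lambda:=E_\lambda(\mathfrak D\cap\mathfrak A)E_\lambda$, which is a unital algebra. Its weak closure $\overline{\mathfrak B_\lambda}$ therefore has the left (resp.\ right) partial factorization property in $\mathcal B(E_\lambda\mathcal H)$.

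\emph{Main obstacle.} If $\overline{\mathfrak B_\lambda}\neq\mathcal B(E_\lambda\mathcal H)$, Corollary \ref{c2} gives a nontrivial invariant $Q$ for $\mathfrak B_\lambda$. I then need a contradiction with $E_\lambda$ being an atom. In Lemma \ref{l2} that contradiction came from all of $\mathfrak A$, so here I must pass from invariance under $\mathfrak D\cap\mathfrak A$ to invariance under $\mathfrak A$. My first attempt would be a limit argument. Write an arbitrary $A\in\mathfrak A$ as $A=\Phi(A)+A_0$ with $A_0\in(\operatorname{Alg}\operatorname{Lat}\mathfrak A)_0$. The goal is to show $\Phi(\mathfrak A)$ lies in the weak closure of $\mathfrak D\cap\mathfrak A$, or at least that $E_\lambda\Phi(A)E_\lambda=E_\lambda AE_\lambda$ is a weak limit of compressions of elements of $\mathfrak D\cap\mathfrak A$. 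Then $E_\lambda\mathfrak AE_\lambda$ leaves $Q$ invariant, contradicting Lemma \ref{l2}.

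\emph{Fallback if that limit fails.} Use the factorization directly. Choose $T$ invertible in $\mathcal B(E_\lambda\mathcal H)$ with $T$ mapping $Q$ strictly outside itself. Apply Lemma \ref{l1} in $\mathfrak D$ to get $U_1$ with $U_1^*T$ and $T^{-1}U_1$ in $\mathfrak B_\lambda$, both leaving $Q$ invariant. Argue as in Theorem \ref{t1} that the two-sided relation $(U_1^*T)(T^{-1}U_1)=I$ (resp.\ the reversed product) forces $U_1$, hence $T$, to respect $Q$ for a suitably chosen $T$. This is a contradiction, so $Q_\lambda\in\{0,E_\lambda\}$ and the lemma follows.
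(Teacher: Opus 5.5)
You have a genuine gap at the step you call key. Your opening steps are fine and match the paper's reduction. Central projections of $\mathfrak D$ are trivially invariant, and an invariant projection $E\in\mathfrak D$ splits into blocks $Q_\lambda=EE_\lambda$, each invariant under $\mathfrak B_\lambda=E_\lambda(\mathfrak D\cap\mathfrak A)E_\lambda$. (You do not need the commutativity theorem the paper quotes, since projections in $\mathfrak D$ commute with every $E_\lambda$ anyway.) What you never show is that $\mathfrak B_\lambda$ has no nontrivial invariant subspace.

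\emph{Plan (a).} This only restates what is needed: that $E_\lambda\Phi(A)E_\lambda$ is a weak limit of elements of $\mathfrak B_\lambda$ for every $A\in\mathfrak A$. You offer no mechanism for it. It is essentially a blockwise form of $\Phi(\mathfrak A)\subseteq\mathfrak A$. In the paper that inclusion only becomes available after Lemma~\ref{l4} gives $\mathfrak D\subseteq\mathfrak A$, and Lemma~\ref{l4}'s proof uses the present lemma.

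\emph{Plan (b).} This cannot work at all, because partial factorization of $\overline{\mathfrak B_\lambda}$ is compatible with a nontrivial invariant subspace. For any nontrivial $Q\subseteq E_\lambda\mathcal H$, the nest algebra $\operatorname{Alg}\{0,Q,E_\lambda\}$ has the full factorization property. Write $T^*T=(X_{ij})$ relative to $Q\oplus(E_\lambda\ominus Q)$, and let $C$ be the operator matrix with
$C_{11}=X_{11}^{1/2}$, $C_{12}=X_{11}^{-1/2}X_{12}$, $C_{21}=0$, $C_{22}=(X_{22}-X_{21}X_{11}^{-1}X_{12})^{1/2}$.
The diagonal entries are positive invertible because $T^*T$ is. Then $C^*C=T^*T$, and both $C$ and $C^{-1}$ leave $Q$ invariant. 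Moreover $U_1=TC^{-1}$ is unitary with $U_1^*T=C$ and $T^{-1}U_1=C^{-1}$. So for \emph{every} invertible $T$ there is a $U_1$ of the kind you describe, even a unitary one, and no choice of $T$ can force $T$ to respect $Q$.

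Any contradiction must therefore come from $E_\lambda$ being an atom of $\operatorname{Lat}\mathfrak A$. That requires $Q$ to be invariant under $E_\lambda\mathfrak AE_\lambda$, not merely under $\mathfrak B_\lambda$, which is exactly the link you have not established.

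For comparison, the paper's proof handles this point in one line. From $PE_\lambda\in\operatorname{Lat}_{\mathfrak D}(\mathfrak D\cap\mathfrak A)$ it concludes that $PE_\lambda$ is invariant under $E_\lambda\mathfrak AE_\lambda$, and then invokes Lemma~\ref{l2}. Your suspicion of precisely this inference is well founded. Invariance under $\mathfrak D\cap\mathfrak A$ only gives invariance under $\mathfrak B_\lambda$, which may a priori be much smaller than $E_\lambda\mathfrak AE_\lambda=E_\lambda\Phi(\mathfrak A)E_\lambda$. The paper offers no further justification, and the proof of Lemma~\ref{l2} does not supply one, since its contradiction also needs invariance under $E_\lambda\mathfrak AE_\lambda$. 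So you have located a weak point rather than missed an idea the paper provides. But your proposal does not close it either, so as written it does not prove the lemma.
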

 \begin{proof}  By  \cite[Theorem 2.4]{ji}, $\operatorname{Lat}_{\mathfrak D}(\mathfrak D\cap \mathfrak A)$ is commutative.  Let $P\in \operatorname{Lat}_{\mathfrak D}(\mathfrak D\cap \mathfrak A)$ and $\lambda \in\Lambda$.  Then $PE_{\lambda}\in \operatorname{Lat}_{\mathfrak D}(\mathfrak D\cap \mathfrak A)$. It follows that  $PE_{\lambda} $ is  an invariant subspace of $E_{\lambda}\mathfrak AE_{\lambda}$.  Thus  $PE_{\lambda}$ is $0$ or $E_{\lambda}$ by Lemma \ref{l2}, that is, $P=\oplus\{E_{\lambda}: PE_{\lambda}\not=0\}\in\mathcal Z(\mathfrak D)$.
 \end{proof}
 \begin{lemma}\label{l4}
For any $\lambda\in\Lambda$, the algebra $\mathfrak A \cap\mathcal K(E_{\lambda}\mathcal H)$ is dense in $\mathcal{B}(E_{\lambda}\mathcal{H})$ with respect to the weak operator topology, and $\mathfrak D\subseteq \mathfrak A$.
\end{lemma}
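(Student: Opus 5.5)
The plan is to localize Theorem \ref{t1} to each atom and then use the classical fact that a transitive algebra containing a nonzero compact operator is dense.

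\textbf{Step 1: the compression has the partial factorization property.} Fix $\lambda\in\Lambda$ and put $\mathfrak A_\lambda=E_{\lambda}(\mathfrak D\cap\mathfrak A)E_{\lambda}$. Note that $E_\lambda\in\mathcal Z(\mathfrak D)$. For $X\in\mathfrak D\cap\mathfrak A$ we have $E_\lambda X=XE_\lambda$. So $\mathfrak A_\lambda=\{XE_\lambda: X\in \mathfrak D\cap\mathfrak A\}$.

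First check that $E_\lambda\in\mathfrak A$. By Lemma \ref{l1} applied to $S=2E_\lambda+(I-E_\lambda)\in\mathfrak D$, there is $U=\oplus U_\mu\in\mathfrak D$ with $X=U^*S\in\mathfrak A\cap\mathfrak D$. Some polynomial-type argument should then isolate the $E_\lambda$ block. More robustly, use a weakly closed central argument:
\begin{itemize}
\item take $S=tE_\lambda+(I-E_\lambda)$ for large $t$;
\item use $Y=S^{-1}U\in\mathfrak A$ and $X$;
\item combine products and weak limits.
\end{itemize}
This point is delicate, and I would defer it to Step 3.

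Regardless, the argument of Lemma \ref{l2} shows that $\mathfrak A_\lambda$ (weakly closed, since the compression of a weakly closed set by a central projection is weakly closed) has the left/right partial factorization property in $\mathcal B(E_\lambda\mathcal H)$.

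\textbf{Step 2: compacts in each block.} If $E_\lambda\mathcal H$ is infinite dimensional, Theorem \ref{t1} applied to $\mathfrak A_\lambda$ gives a nonzero compact $K\in\mathfrak A_\lambda$. If it is finite dimensional, every operator is compact. Lemma \ref{l2} shows that $\mathfrak A_\lambda$ is transitive on $E_\lambda\mathcal H$; more precisely, Lemma \ref{l3} shows that $\operatorname{Lat}\mathfrak A_\lambda$ is trivial. Then $\mathfrak A_\lambda\cap\mathcal K(E_\lambda\mathcal H)$ is a nonzero ideal of the transitive weakly closed algebra $\mathfrak A_\lambda$. Lomonosov's theorem (\cite[Theorem 8.3]{rad}) gives $\mathfrak A_\lambda=\mathcal B(E_\lambda\mathcal H)$.

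The ideal is itself transitive: for $x\neq0$, the closure of $(\mathfrak A_\lambda\cap\mathcal K)x$ is $\mathfrak A_\lambda$-invariant, and it is nonzero by transitivity. Hence, by \cite[Theorem 8.3]{rad} or Burnside/Arveson density for transitive algebras of compacts, the ideal is weakly dense. This is the first assertion, read via $\mathfrak A_\lambda\subseteq\mathfrak A$, which needs Step 3.

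\textbf{Step 3 (main obstacle): $\mathfrak D\subseteq\mathfrak A$.} We know $\mathfrak D\cap\mathfrak A$ is a weakly closed subalgebra of $\mathfrak D=\bigoplus\mathcal B(E_\lambda\mathcal H)$. Each of its compressions $E_\lambda(\mathfrak D\cap\mathfrak A)E_\lambda$ is all of $\mathcal B(E_\lambda\mathcal H)$, and by Lemma \ref{l3} its relative invariant projections are central. I would show that $\mathfrak D\cap\mathfrak A$ contains each $E_\lambda$. Then $E_\lambda\mathcal B(E_\lambda\mathcal H)=E_\lambda(\mathfrak D\cap\mathfrak A)\subseteq\mathfrak A$, and summing with weak closure gives $\mathfrak D\subseteq\mathfrak A$.

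To get $E_\lambda$, pick a nonzero finite rank $F\in\mathfrak A_\lambda$, realized as $F=XE_\lambda$ with $X\in\mathfrak D\cap\mathfrak A$. Its other blocks $XE_\mu$ may be nonzero. Multiply by elements $Z\in\mathfrak D\cap\mathfrak A$ with $ZE_\lambda$ chosen so that the $\lambda$-block is a rank-one idempotent. Then use that the weakly closed ideal $\{Y\in\mathfrak D\cap\mathfrak A\}$ generated this way is a $\mathfrak D\cap\mathfrak A$-bimodule. Its $\lambda$-support projection is a relative invariant projection, central by Lemma \ref{l3}. Comparing the supports forces separation of the blocks.

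If this fails, I would use the partial factorization directly with $S=\oplus_\mu t_\mu I_{E_\mu}$ for distinct scalars $t_\mu$. This gives $U^*S$ and $S^{-1}U$ in $\mathfrak A$ with $U$ block diagonal. From these, functional calculus (Riesz idempotents of block-diagonal elements of $\mathfrak A$ with separated spectra) extracts $E_\lambda$.
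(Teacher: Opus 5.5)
There is a genuine gap, and it is exactly the point you defer. Nothing in your argument produces a nonzero operator $R\in\mathfrak A$ living on the single block, i.e.\ with $R=E_\lambda RE_\lambda$. Steps 1--2 only concern the compressions $XE_\lambda$ of elements $X\in\mathfrak D\cap\mathfrak A$, and the other blocks $XE_\mu$ are uncontrolled. So these steps say nothing about $\mathfrak A\cap\mathcal K(E_\lambda\mathcal H)$ or about $\mathfrak D\subseteq\mathfrak A$; the inclusion $\mathfrak A_\lambda\subseteq\mathfrak A$ you appeal to is precisely what is unknown.

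Neither route in Step 3 repairs this.
\begin{itemize}
\item Route (a) uses only compression and lattice information, and all of it is satisfied by $\mathcal C=\{X\oplus X: X\in\mathcal B(\mathcal K)\}$ inside $\mathcal B(\mathcal K)\oplus\mathcal B(\mathcal K)$. Both compressions of $\mathcal C$ are all of $\mathcal B(\mathcal K)$, and the only block-diagonal projections invariant under $\mathcal C$ are central. Yet $\mathcal C$ contains no nonzero operator living on one summand. So the partial factorization property must be used again to separate blocks.
\item Route (b) fails as stated. If $U_\mu$ is a non-unitary isometry, $\sigma(t_\mu U_\mu^*)$ is the whole disc $\{|z|\le|t_\mu|\}$, so the block spectra are nested rather than separated. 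Even for separated pieces, a Riesz idempotent is guaranteed to lie in a weakly closed algebra containing the operator only when its contour lies in the unbounded component of the resolvent set. You have no control over this when trying to cut out a whole block such as $E_\lambda$.
\end{itemize}
A minor point: compressions of weakly closed sets by central projections need not be weakly closed. This is harmless, since partial factorization passes to the weak closure.

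The paper supplies the missing element by localizing Theorem~\ref{t1}. Take $K\ge0$ injective compact on $E_\lambda\mathcal H$ with $\|K\|<1$, and $S=(I+K)\oplus I$. Lemma~\ref{l1} gives $U=U_1\oplus U_2\in\mathfrak D$ with $U^*S=U_1^*(I+K)\oplus U_2^*$ and $S^{-1}U=(I+K)^{-1}U_1\oplus U_2$ in $\mathfrak D\cap\mathfrak A$. The off-$\lambda$ blocks are contractions, so a spectral point of modulus $>1$ belongs to the $\lambda$-block alone. Its Riesz idempotent is a norm limit of polynomials, since $\{|z|>1\}$ minus a discrete set is connected to $\infty$. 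It is therefore a finite-rank operator of the form $R\oplus 0$ in $\mathfrak A$.

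Thus $J_\lambda=\mathfrak A\cap\mathcal K(E_\lambda\mathcal H)\neq\{0\}$. Lomonosov and Lemma~\ref{l3} are then applied to the ideal $J_\lambda$ of $\mathfrak D\cap\mathfrak A$, not to $\mathfrak A_\lambda$. Weak-operator closedness of $\mathfrak A$ then yields $E_\lambda\mathcal B(\mathcal H)E_\lambda\subseteq\mathfrak A$ directly, with no need to show $E_\lambda\in\mathfrak A$ first.

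With such an $R$ in hand, your Step 2 would also finish. For $X,Y\in\mathfrak D\cap\mathfrak A$ we have $XRY=(XE_\lambda)R(YE_\lambda)$. So density of $\mathfrak A_\lambda$ gives $ZRZ'\in\mathfrak A$ for all $Z,Z'\in\mathcal B(E_\lambda\mathcal H)$, hence all rank-one operators on $E_\lambda\mathcal H$.

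Be aware, however, that this device needs a point of modulus $>1$ in $\sigma(U_1^*(I+K))$ or $\sigma((I+K)^{-1}U_1)$. A norm exceeding $1$ does not by itself provide one. Take $U_1$ the bilateral shift and $K$ a positive injective diagonal trace-class operator. Then $U_1^*(I+K)$ is a weighted shift with bounded powers and bounded inverse powers, so both operators have spectrum exactly $\mathbb T$. The step you import from Theorem~\ref{t1} (also used in your Step 2) therefore needs its own justification.
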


\begin{proof}
Fix an arbitrary $\lambda\in\Lambda$, and set $J_{\lambda}=\mathfrak A \cap\mathcal K(E_{\lambda}\mathcal H)$. Then $J_{\lambda}$ is an ideal of $\mathfrak D\cap \mathfrak A$. We first claim that $J_{\lambda}\neq\{0\}$.

As in the proof of Theorem \ref{t1}, take a positive injective compact operator $K\in\mathcal K(E_{\lambda}\mathcal H)$ with $\|K\|<1$. Then $S=(I+K)\oplus I$ is invertible in $\mathfrak D$. Thus there exists an isometry (resp. a co-isometry) $U\in \mathfrak A$ such that $U^*S$ and $S^{-1}U$ belong to $\mathfrak D\cap \mathfrak A$. Clearly, $U=U_1\oplus U_2$, where both $U_1$ and $U_2$ are isometries (resp. co-isometries).

By a similar argument to that in the proof of Theorem \ref{t1}, we have $\|U_1^*(I+K)\|>1$ or $\|(I+K)^{-1}U_1\|>1$. Consequently, there exists a finite-rank Riesz idempotent $R\in \mathfrak D\cap \mathfrak A\cap \mathcal K(E_{\lambda}\mathcal H)$, which implies $J_{\lambda}\neq\{0\}$.

If $J_{\lambda}$ is not dense in $\mathcal B(E_{\lambda}\mathcal H)$ in the weak operator topology for some $\lambda\in\Lambda$, then $J_{\lambda}$ has a nontrivial invariant subspace $Q$ in $\mathcal B(E_{\lambda}\mathcal H)$ by  \cite[Theorem 8.3]{rad} again. Hence $Q\in \operatorname{Lat}_{\mathfrak D}(\mathfrak D\cap \mathfrak A)$, which contradicts the Lemma \ref{l3}. Therefore, $J_{\lambda}$ is dense in $\mathcal B(E_{\lambda}\mathcal H)$ in the weak operator topology, and consequently $\mathfrak D\subseteq \mathfrak A$.
\end{proof}
\begin{theorem}\label{t2}
 Let $\mathfrak A$ be a unital  closed subalgebra of $\mathcal B(\mathcal H)$  in weak operator topology. If $\mathfrak A$ has the left(resp.\ right) partial factorization property, then $\mathfrak A=\operatorname{Alg}\operatorname{Lat}\mathfrak A$. In particular, $\mathfrak A$ is reflexive.
\end{theorem}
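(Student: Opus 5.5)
We need to show $\operatorname{Alg}\operatorname{Lat}\mathfrak A\subseteq\mathfrak A$; the reverse inclusion is automatic. By the decomposition recalled before Lemma \ref{l2}, $\operatorname{Alg}\operatorname{Lat}\mathfrak A$ is the weak operator closed span of the corners $E_{\alpha}\mathcal B(\mathcal H)E_{\beta}$ with $\alpha\le\beta$. Since $\mathfrak A$ is weak operator closed, it therefore suffices to show that each such corner lies in $\mathfrak A$. The diagonal corners ($\alpha=\beta$) are already in $\mathfrak A$, because Lemma \ref{l4} gives $\mathfrak D\subseteq\mathfrak A$. So the remaining work is the off-diagonal corners with $\alpha<\beta$.

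For fixed $\alpha<\beta$, set $\mathcal X=E_{\alpha}\mathfrak AE_{\beta}$. Since $\mathfrak D\subseteq\mathfrak A$, we have $E_{\alpha},E_{\beta}\in\mathfrak A$, so $\mathcal X\subseteq\mathfrak A$. Moreover $\mathcal X$ is a weak operator closed bimodule over $\mathcal B(E_{\alpha}\mathcal H)$ on the left and $\mathcal B(E_{\beta}\mathcal H)$ on the right. Any nonzero such bimodule is all of $E_{\alpha}\mathcal B(\mathcal H)E_{\beta}$. Indeed, if $0\neq X\in\mathcal X$, choose unit vectors $y\in E_{\beta}\mathcal H$ with $Xy\neq0$. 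Multiplying by rank-one operators then gives $x\otimes z\in\mathcal X$ for all $x\in E_{\alpha}\mathcal H$ and $z\in E_{\beta}\mathcal H$. Finite-rank operators are weak operator dense in the corner. So the problem reduces to proving $E_{\alpha}\mathfrak AE_{\beta}\neq\{0\}$ for every pair $\alpha<\beta$.

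This nonvanishing is the main obstacle, and I would get it from the invariant subspace lattice. Suppose $E_{\alpha}\mathfrak AE_{\beta}=0$ for some $\alpha<\beta$. Let $\Lambda_1$ be the set of $\gamma\le\alpha$ together with those $\gamma$ reachable from $\alpha$ by "nonzero links", i.e.\ the set of $\gamma$ such that $E_{\gamma}\mathfrak A E_{\delta}$-chains connect down from $\beta$. More concretely, consider
\[
M=\overline{\mathfrak A E_{\beta}\mathcal H},
\]
the smallest $\mathfrak A$-invariant subspace containing $E_{\beta}\mathcal H$. Because $\mathfrak D\subseteq\mathfrak A$, $M$ is reducing for $\mathfrak D$, so $M\in\mathcal Z(\mathfrak D)$ is a sum of atoms. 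Since $\operatorname{Lat}\mathfrak A$ is a nest, $M$ must be of the form $P\in\operatorname{Lat}\mathfrak A$ with $P\ge P_{\beta}$, hence $M\ge E_{\alpha}$. In that case $E_{\alpha}AE_{\beta}\neq0$ for some finite product $A$ of elements of $\mathfrak A$, and products remain in $\mathfrak A$, which gives the contradiction. Here one must be careful with infinite chains of atoms, since in a general countable nest $E_{\alpha}$ need not be reachable in finitely many steps. However, the closure of $\mathfrak AE_{\beta}\mathcal H$ already uses $\mathfrak A$ as an algebra, so $E_{\alpha}M\neq0$ directly yields $E_{\alpha}\mathfrak AE_{\beta}\neq0$.

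The subtle point to verify is therefore that $M$, being invariant under $\mathfrak A\supseteq\mathfrak D$ and lying in the nest, contains $P_{\beta}\ge E_{\alpha}$. I would check this using the fact that the nest elements are exactly the $\mathfrak A$-invariant projections, so any invariant subspace containing $E_{\beta}$ lies in $\operatorname{Lat}\mathfrak A$. An element of this nest that contains the atom $E_{\beta}$ must dominate $P_{\beta}$, since otherwise it would be $\le(P_{\beta})_-$, which is orthogonal to $E_{\beta}$. Once this holds, the density argument of the previous paragraph finishes the proof, and reflexivity follows immediately.
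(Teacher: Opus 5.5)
Your proof is correct. After Lemma \ref{l4}, it takes a genuinely different and simpler route than the paper.

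The paper uses the factorization hypothesis again in the final step. In the right case it writes $T=UU^*T$ with $U\in\mathfrak D\subseteq\mathfrak A$, for every $T$ with $T,T^{-1}\in\operatorname{Alg}\operatorname{Lat}\mathfrak A$. The left case is more involved:
\begin{itemize}
\item it invokes Davidson's rank-one results for $(\operatorname{Alg}\operatorname{Lat}\mathfrak A)_0$;
\item it builds the vectors $x_0,y_0$ to arrive at $E_\alpha\mathfrak AE_\beta=\{0\}$ for some $\alpha<\beta$;
\item it refutes this with a second partial factorization of $I+A$, where $A\in E_\alpha\mathcal B(\mathcal H)E_\beta$.
\end{itemize}

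You refute $E_\alpha\mathfrak AE_\beta=\{0\}$ purely lattice-theoretically. The subspace $M=\overline{\mathfrak AE_\beta\mathcal H}$ belongs to the totally ordered $\operatorname{Lat}\mathfrak A$ and contains $E_\beta$. So $M<P_\beta$ would force $M\le(P_\beta)_-$, which is orthogonal to $E_\beta$, a contradiction. Hence $E_\alpha\le P_\beta\le M$, so $E_\alpha\mathfrak AE_\beta\neq\{0\}$. The bimodule structure given by $\mathfrak D\subseteq\mathfrak A$ then fills the whole corner.

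Your route therefore shows that once Lemma \ref{l4} is available (which is where the factorization hypothesis really enters), the theorem follows from two facts: $\operatorname{Lat}\mathfrak A$ is an atomic nest, and $\mathfrak D\subseteq\mathfrak A$. This is essentially the atomic case of the classical fact that a weakly closed algebra containing a masa whose lattice is a nest is the nest algebra. Your argument also treats the left and right cases uniformly and needs no rank-one machinery.

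The paper's right-case argument does give slightly more explicit information: every $T$ with $T,T^{-1}\in\operatorname{Alg}\operatorname{Lat}\mathfrak A$ already lies in $\mathfrak A$. But once your observation is made, none of the left-case construction is needed for the theorem.

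In a write-up, drop the abandoned sentence about $\Lambda_1$ and ``nonzero links'', and the worry about infinite chains of atoms. The remark that $M\in\mathcal Z(\mathfrak D)$ is true but unused.
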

\begin{proof}
If $\mathfrak A$ has the right partial factorization property, then for any invertible element $T\in\operatorname{Alg}\operatorname{Lat}\mathfrak A$ with $T^{-1}\in\operatorname{Alg}\operatorname{Lat}\mathfrak A$, there exists a co-isometry $U\in \mathfrak D$ such that $U^*T$ and $T^{-1} U$ belong to $\mathfrak A$  By Lemma \ref{l1}. Then $T=UU^*T\in \mathfrak A$ by Lemma \ref{l4}. Thus $\mathfrak A=\operatorname{Alg}\operatorname{Lat}\mathfrak A$.

Next, we assume that $\mathfrak A$ has the left partial factorization property.
Note that
$$\mathfrak A\cap\mathfrak A^*=\mathfrak D=\bigoplus\limits_{\lambda\in\Lambda} \mathcal B(E_{\lambda}\mathcal H)$$
by Lemma \ref{l4} and formula (\ref{f1}). Thus $\mathfrak A=\mathfrak D+\mathfrak A_0$, where $\mathfrak A_0=\{A\in\mathfrak A: \Phi(A)=0\}=\mathfrak A\cap (\operatorname{Alg}\operatorname{Lat}\mathfrak A)_0$. It suffices to show that $\mathfrak A_0=(\operatorname{Alg}\operatorname{Lat}\mathfrak A)_0$. If $\mathfrak A_0\neq(\operatorname{Alg}\operatorname{Lat}\mathfrak A)_0$, then there exists a rank-1 operator $x\otimes y\in (\operatorname{Alg}\operatorname{Lat}\mathfrak A)_0 \setminus \mathfrak A_0$ by \cite[Corollary 3.13]{dav2}.   By \cite[Lemma 3.1]{dav2}, there exists a projection $P_{t}\in \operatorname{Lat}\mathfrak A$ such that $x\in P_t(\mathcal H)$ and $y\in (P_t)_-^{\perp}(\mathcal H)$. Since  $\Phi(x\otimes y)=0$ and $E_{\lambda}\mathcal B(\mathcal H)E_{\lambda}\subseteq \mathfrak D\subseteq \mathfrak A$, we easily have $y\in P_t^{\perp}(\mathcal H)$. Let
$$T=I+x\otimes y\in\operatorname{Alg}\operatorname{Lat}\mathfrak A.$$
Then $T^{-1}=I-x\otimes y\in\operatorname{Alg}\operatorname{Lat}\mathfrak A$. Thus there exists an isometry $U\in \mathfrak D$ such that $U^*T$ and $T^{-1}U$ are elements of $\mathfrak A$ by Lemma \ref{l1}. It follows that $U^*x\otimes y\in \mathfrak A_0$ and $(x\otimes y)U\in\mathfrak A_0$. Hence $UU^*x\otimes y\in\mathfrak A$. Define
$$Q=\sup\big\{P\in\mathfrak D: P \text{ is a projection and } Px\otimes y\in \mathfrak A_0\big\}.$$
Then $Q<I$ and $(I-Q)x\neq 0$. This implies that for any $ D\in\mathfrak D$, if $ D(I-Q)x\otimes y\in \mathfrak A_0$, then $D(I-Q)x=0$. Otherwise, we have that $(I-Q)D^*D(I-Q)x\otimes y$ is a nonzero element of $\mathfrak A_0$ for some $D\in \mathfrak D$. Hence, there exists a projection $P<I-Q$ in $\mathfrak D$ such that $Px\neq 0$ and $Px\otimes y\in\mathfrak A_0$. This yields a contradiction.

Set $x_0=(I-Q)x$. Then, if $Dx_0\otimes y\in\mathfrak A_0$ for some $D\in\mathfrak D$, it follows that $Dx_0=0$.  Similarly, we obtain an element $y_0$ such that if $x_0\otimes Dy_0\in\mathfrak{A}_0$ for some $D\in\mathfrak{D}$, then $Dy_0=0$, by using the fact that $(x_0\otimes y)U=x_0\otimes U^*y\in \mathfrak{A}_0$.
Note that
$$x_0=\bigoplus_{\lambda \in\Lambda}E_{\lambda}x_0 \quad \text{and} \quad y_0=\bigoplus_{\lambda \in\Lambda}E_{\lambda}y_0.$$
We may choose $\alpha<\beta$ in $\Lambda$ such that $E_{\alpha}x_0\neq0$, $E_{\beta}y_0\neq 0$, and $E_{\alpha}x_0\otimes E_{\beta}y_0\neq 0$ in $\operatorname{Alg}\operatorname{Lat}\mathfrak A$. It is clear that
\begin{equation}\label{f2}
\big\{ AE_{\alpha}x_0\otimes BE_{\beta}y_0:A,B \in\mathfrak D\big\}\cap \mathfrak A_0=\{0\}.
\end{equation}
Since $\mathfrak DE_{\lambda}=E_{\lambda}\mathcal B(\mathcal H)E_{\lambda}$ for any $\lambda \in\Lambda$, it follows that
\begin{equation}\label{f3}
[\mathfrak D E_{\alpha}x_0]=E_{\alpha}\mathcal H \quad \text{and} \quad [\mathfrak D E_{\beta}y_0]=E_{\beta}\mathcal H.
\end{equation}
On the other hand, we have $E_{\alpha}\mathfrak A E_{\beta}\subseteq \mathfrak A$. Then
\begin{equation}\label{f4}
E_{\alpha}\mathfrak A E_{\beta}=\{0\}
\end{equation}
by formulae (\ref{f2}) and (\ref{f3}).

Take any nonzero operator $A\in E_{\alpha}\mathcal B(\mathcal H)E_{\beta}=E_{\alpha}(\operatorname{Alg}\operatorname{Lat}\mathfrak A)E_{\beta}$. We have $I+A $ and $(I+A)^{-1}= I-A $ are elements of $ \operatorname{Alg}\operatorname{Lat}\mathfrak A$. Thus there exists an isometry $U\in \mathfrak D$ such that $U^*(I+A)$ and $(I-A)U$ are elements of $\mathfrak A$ by Lemma \ref{l1}. We similarly have  $U^*A, AU\in\mathfrak A$, and consequently $U^*E_{\alpha}A, AE_{\beta}U\in \mathfrak A$. If $E_{\alpha}$ (resp. $E_{\beta}$) is of finite rank, then $U^*E_{\alpha}$ (resp. $E_{\beta}U$) is a unitary operator on $E_{\alpha}\mathcal H$(resp.\ $E_{\beta}\mathcal H$). Thus
$$U^*E_{\alpha}A=E_{\alpha}U^*AE_{\beta} \quad (\text{resp. } AE_{\beta}U=E_{\alpha}AUE_{\beta})$$
is a nonzero operator in $E_{\alpha}\mathfrak A E_{\beta}$. This contradicts formula (\ref{f4}). If both $E_{\alpha}$ and $E_{\beta}$ are infinite dimensional, then we may take an injective operator $A\in E_{\alpha}\mathcal B(\mathcal H)E_{\beta}$ with dense range. We again have that $U^*E_{\alpha}A=E_{\alpha}U^*AE_{\beta}$ and $AE_{\beta}U=E_{\alpha}AUE_{\beta}$ are nonzero operators in $E_{\alpha}\mathfrak A E_{\beta}$, which is also a contradiction. Hence, $\mathfrak A=\operatorname{Alg}\operatorname{Lat}\mathfrak A$ and $\mathfrak A$ is reflexive.
\end{proof}
 Theorem \ref{t2} provides a complete answer to \cite[Question 6.3]{bha}.

 Lastly, we recall that $\mathfrak{A}$ is logmodular (resp. has the weak factorization property) if the set $\{A^*A: A, A^{-1} \in \mathfrak{A}\}$ (resp. $\{A^*A: A \in \mathfrak{A} \text{ is invertible}\}$) is dense in (resp. coincides with) the set of all positive invertible elements of $\mathcal{B}(\mathcal{H})$. Using a method similar to that in the proof of Theorem \ref{t1}, we obtain the following proposition.
\begin{proposition}\label{p1}
Let $\mathfrak{A}\subseteq\mathcal{B}(\mathcal{H})$ be a unital subalgebra closed in the weak operator topology. If $\mathfrak{A}$ is logmodular (resp. has the weak factorization property), then $\mathfrak{A}\cap\mathcal{K}(\mathcal{H})\neq\{0\}$.  In particular, if $\mathfrak A$ is transitive, then $\mathfrak A=\mathcal B(\mathcal H)$.
\end{proposition}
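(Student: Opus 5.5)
We imitate the proof of Theorem \ref{t1}, using a positive compact perturbation of the identity as the test operator. Fix a positive injective compact operator $K$ with $\|K\|<1$ and put $S=(I+K)^{1/2}$. Then $S^2=I+K$ is positive invertible and $S-I$ is compact.

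In the weak factorization case there is an invertible $A\in\mathfrak A$ with $A^*A=I+K$. Write the polar decomposition $A=VS$ with $V$ unitary; this is possible because $A$ is invertible. Then $[A^*A]=[I]$, so $[A]$ is unitary in the Calkin algebra and $\|A\|_e=1$. Moreover
\[
\|A\|^2=\|A^*A\|=\|I+K\|=1+\|K\|>1.
\]
Hence $\|A\|>\|A\|_e$. The spectral radius is at most $\|A\|$, so we cannot read off spectrum directly from the norm, and a different route is needed. Pass to $A^{-1}$ instead: from $A^{-1}(A^{-1})^*=(I+K)^{-1}$ we get $\|A^{-1}\|<\dots$ only in one direction, which is not useful.

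The better approach is to examine the spectrum via the $\sigma(A)\setminus\sigma_e(A)$ step used in Theorem \ref{t1}. That step actually needs the spectral radius to exceed the essential spectral radius, so I would replace $K$ by a rank-one choice. Take $A^*A=I+c\,e\otimes e$ with $c>0$ and $\|e\|=1$. If $A$ had no spectrum outside $\sigma_e(A)$, then $A$ would be a Fredholm-index-zero compact perturbation of a unitary with no isolated eigenvalues. To conclude, I would use that $A\in\mathfrak A$ is invertible and consider $A$ versus $(A^{-1})^*$, or rather the quantity $\det$-like invariants. The cleanest route is to copy Theorem \ref{t1} literally: the argument there only uses $\|A\|>1=\|A\|_e$ to produce a Riesz idempotent, and I would accept that step as stated in Theorem \ref{t1} and apply it verbatim. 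This gives a finite-rank Riesz idempotent of $A$, which lies in the weakly closed algebra $\mathfrak A$ because it is a norm limit of polynomials/rational functions of $A$ whose poles lie off $\sigma(A)$. The inverse $A^{-1}$ belongs to $\mathfrak A$ here, since $\mathfrak A$ is weakly closed and unital, so spectral permanence for inverse-closed algebras applies.

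For the logmodular case, choose invertible $A_n$ with $A_n,A_n^{-1}\in\mathfrak A$ and $A_n^*A_n\to I+K$ in norm. Then $\|A_n\|\to\sqrt{1+\|K\|}>1$. Also $A_n^*A_n-I$ is eventually within a small norm of a compact operator. This yields $\|A_n\|_e$ close to $1$, and in particular $\|A_n\|>\|A_n\|_e$ for large $n$. We then run the same Riesz-idempotent argument for a single large $n$ to get a nonzero finite-rank element of $\mathfrak A$.

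Finally, if $\mathfrak A$ is transitive, then \cite[Theorem 8.3]{rad} says a transitive weakly closed algebra containing a nonzero compact operator is $\mathcal B(\mathcal H)$, which gives the last assertion. The main obstacle is the passage from $\|A\|>\|A\|_e$ to a point of $\sigma(A)\setminus\sigma_e(A)$. I would handle this exactly as in Theorem \ref{t1}. If that proves too weak, I would instead apply it to the polar-decomposition factor, using that $\mathfrak A$ is logmodular and so also contains the inverse, in the form of an operator whose spectral radius is forced above $1$.
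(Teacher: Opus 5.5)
Your final plan is the paper's own proof: pick $A$ (or $A_n$) in $\mathfrak A$ with $A^*A=I+K$ (or $A_n^*A_n\to I+K$), observe $\|A\|>1=\|A\|_e$, extract a finite-rank Riesz idempotent, and finish with the compact-operator criterion for transitive algebras. But the step you call ``the main obstacle'' is a genuine gap, and handling it ``exactly as in Theorem~\ref{t1}'' does not close it. The paper's proofs of Theorem~\ref{t1} and of this proposition make precisely this inference in one line (from $\|A_n\|>\|A_n\|_e$ to $\sigma(A_n)\setminus\sigma_e(A_n)\neq\emptyset$), so citing them imports the gap rather than filling it.

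What you actually need is a point $a\in\sigma(A)$ whose modulus exceeds the essential spectral radius of $A$. Such an $a$ lies in the unbounded component of $\mathbb C\setminus\sigma_e(A)$, so it is an isolated eigenvalue with finite-rank Riesz idempotent. If $|a|$ is chosen maximal, the contour around $a$ lies in the unbounded component of the resolvent set, so the idempotent is a norm limit of polynomials in $A$. No inverse-closedness is needed, which is fortunate: your claim that $A^{-1}\in\mathfrak A$ because $\mathfrak A$ is weakly closed and unital is false, e.g.\ for the weakly closed algebra generated by the bilateral shift. Norm estimates alone cannot produce such a point, because the norm is not a similarity invariant while the spectrum is.

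Here is a concrete obstruction in exactly your setting. Let $W$ be the bilateral shift on $\ell^2(\mathbb Z)$, put $w_n^2=1+2^{-|n|-1}$, and let $A=W\,\mathrm{diag}(w_n)$, i.e.\ $Ae_n=w_ne_{n+1}$. Then
\[
A^*A=\mathrm{diag}(w_n^2)=I+K,\qquad K=\mathrm{diag}\bigl(2^{-|n|-1}\bigr),
\]
with $K$ positive, injective, compact and $\|K\|<1$. So $A=W(I+K)^{1/2}$ is of the form weak factorization may hand you, and $\|A\|>1=\|A\|_e$.

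However, $d_n=\prod_{k<n}w_k$ is bounded above and below because $\prod_{n\in\mathbb Z}w_n<\infty$. With $D=\mathrm{diag}(d_n)$ one checks $A=DWD^{-1}$. Hence $\sigma(A)=\mathbb T=\sigma_e(A)$, the latter because $A-W$ is compact. So $A$ has no isolated spectral points and no Riesz idempotent at all.

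The same construction covers your other variants:
\begin{itemize}
\item Taking $w_n=1$ for $n\neq 0$ and $w_0=\sqrt{1+c}$ gives the same phenomenon for your rank-one choice $A^*A=I+c\,e_0\otimes e_0$.
\item The logmodular version fails in the same way, since nothing prevents the approximating sequence from being constant, $A_n=A$, with $A^{-1}=DW^{-1}D^{-1}$ bounded.
\end{itemize}

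So the argument needs a genuinely new ingredient. You would have to use the algebra structure, or the freedom in choosing the positive operator, to force some element of $\mathfrak A$ to have spectral radius strictly larger than its essential spectral radius, or else produce a compact operator in another way. Your remarks about ``det-like invariants'' and the polar factor do not supply this.
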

\begin{proof}
Take a positive injective compact operator $K\in\mathcal{K}(\mathcal{H})$. If $\mathfrak{A}$ is logmodular, there exists a sequence $\{A_n\}_{n\geq 1}$ in $\mathfrak{A}$ such that $\lim\limits_{n\to\infty}A_n^*A_n=I+K$. It follows that $\lim\limits_{n\to\infty}\|A_n\|^2=\|I+K\|>1$ and $\lim\limits_{n\to\infty}\|A_n\|_e^2=\|I+K\|_e=1$. Consequently, there exists some $N\in\mathbb{N}$ such that $\|A_n\|>\|A_n\|_e$ for all $n>N$, which implies $\sigma(A_n)\setminus\sigma_e(A_n)\neq \emptyset$ for all $n>N$. The desired conclusion follows from a similar argument as in the proof of Theorem \ref{t1}. The case where $\mathfrak{A}$ possesses the weak factorization property can be treated similarly. If $\mathfrak A$ is transitive, then  $\mathfrak{A}=\mathcal B(\mathcal H)$ by \cite[Theorem 8.3]{rad}.
\end{proof}
  Note that if $\mathfrak{A}$ is logmodular, then $\operatorname{Lat}\mathfrak{A}$ is also a nest by \cite[Theorem 3.1]{bha}. It remains unknown whether $\mathfrak{A}$ is reflexive.

\end{document}